\newtheorem{theorem}{Theorem}
\newtheorem{lemma}{Lemma}
\newtheorem{proposition}{Proposition}
\newcommand{\naturals}{\mathbb{N}}
\newcommand{\integers}{\mathbb{Z}}
\newcommand{\probability}{\mathbb{P}}
\newcommand{\expectation}{\mathbb{E}}
\newcommand{\sgn}{\text{sgn}}
\newcommand{\geom}{\text{Geom}}
\newcommand{\Mod}[1]{\ (\mathrm{mod}\ #1)}
\newcommand{\Uc}{\mathcal{U}}
\newcommand{\Rc}{\mathcal{R}}
\newcommand{\Tc}{\mathcal{T}}
\newcommand{\Ec}{\mathcal{E}}
\newcommand{\Wc}{\mathcal{W}}
\newcommand{\Sc}{\mathcal{S}}
\begin{document}

\preprint{APS/123-QED}

\title{First Passage with Restart in Discrete Time:\\with applications to biased random walks on the half-line}% Force line breaks with \\

\author{Jason M. Flynn}
 \email{jasonmflynn@ufl.edu}
\author{Sergei S. Pilyugin}%
 \email{pilyugin@ufl.edu}
\affiliation{%
 University of Florida,\\
 Department of Mathematics
}%

\date{\today}% It is always \today, today,
             %  but any date may be explicitly specified

\begin{abstract}
In recent years, it has been well-established that adding a restart mechanism can alter the first passage statistics of a stochastic processes in useful and interesting ways. Though different mechanisms have been investigated, we derive a probability generating function for a discrete-time First Passage process Under Restart and use it to examine two examples, including a biased random walk on the non-negative integers.
\end{abstract}

%\keywords{Suggested keywords}%Use showkeys class option if keyword
                              %display desired
\maketitle

%\tableofcontents

\section{Introduction}\label{sec:introduction}

Consider the search for a pair of eyeglasses. Perhaps we look in the two or three most likely places without success, and resign ourselves to checking the top shelf of the refrigerator. However, particularly if the missing eyeglasses are ours, we know that the target may well have been missed in any of the previous locations. We might find our target more quickly if, from time to time, we go back and start the search again.

While the mental framework of using restart to shorten the mean time of search is a useful analogy, this principle can be used to alter the dynamics of many kinds of First Passage (FP) processes across physics \cite{gupta2014fluctuating}, chemistry \cite{reuveni2014role}, biology \cite{roldan2016stochastic} and computer science \cite{huang2007effect}.

To solidify this concept, suppose we have some stochastic process $\Wc_n$ with state space $\Sc$ that starts in some initial set $A\subset\Sc$, by which we mean $\Wc_0\in A$. Then suppose the FP characteristics of $\Wc_n$ into some target set $B\subset\Sc$ are known with the hitting time of this underlying process denoted by $\Uc$. Our main interest becomes whether restarting this process at random intervals might change the mean FP time on some external clock. That is, we define a process $\Wc_n^*$ that starts at $A$ and returns to $A$ at randomly determined times we call restarts, and which has the same dynamics as $\Wc_n$ between those restart events.

For this paper, we assume that the process evolves in discrete time, meaning $n\in\naturals\coloneqq\{0,1,2,\ldots\}$, and that $A\cap B=\emptyset$. For the first assumption, we note that other authors have already spent much time investigating the continuous time case \cite{evans2020stochastic}. For the second, it is helpful for some our results to assume that the FP time of the underlying process is almost surely not 0 ($\probability(\Uc=0)=0)$. Up through Section \ref{sec:hitting_times}, it would be relatively simple to relax this condition, but it is critical for part of Section \ref{sec:ET<EU} (as we note there).

\section{Defining the FPUR} \label{sec:definitions}
In this paper, we largely work within the frame established by Pal and Reuveni in \cite{pal2017first}, and parallel the recent work from Bonomo and Pal in \cite{bonomo2021first} by giving a recursive definition for the hitting time of the First Passage Under Restart (FPUR) process and then deriving its generating function. We denote by $\Uc$ the FP time of the underlying process, by $\Rc$ the time until the next restart, and by $\Tc$ the FP time of the process with restart. Using $\Tc^*$ to represent an independent and identically distributed copy of $\Tc$, we have

\begin{equation} \label{eq:recursive_definition}
    \Tc = 
    \begin{cases}
    \Uc & \text{if }  \Rc > \Uc \\
    \Rc + \Tc^* & \text{if }  \Rc \le \Uc
    \end{cases}.
\end{equation}
In other words, if the underlying process reaches the target set before the restart occurs, then the FPUR also concludes at that time. In the case, however, that the restart occurs before or simultaneously to the underlying process finishing, the time of the restart is noted, and $\Uc$ and $\Rc$ are drawn anew from their respective distributions.  It's worth emphasizing that, in the case that the first passage of the underlying process and the restart occur at the same time, the restart ``wins" the tie, and the process is reset to its initial position. This is an arbitrary decision with some consequences that will be discussed when they come up. In \cite{bonomo2021first}, Bonomo and Pal have a similar derivation in which the weak and sharp inequalities are reversed in the recursive definition.

Even before deriving the generating function for $\Tc$, much can be seen directly from (\ref{eq:recursive_definition}), including a simple expression for $\expectation[\Tc]$, with $p_r \coloneqq \probability(\Rc\le\Uc)$.
\begin{align*}
    \expectation[\Tc] & = \expectation[\Uc \mid \Rc>\Uc]\left(1-p_r\right) + \expectation[\Rc + \Tc^* \mid \Rc \le \Uc]p_r \\
    & = \frac{1}{1-p_r}\left( \expectation[\Uc \mid \Rc>\Uc]\left(1-p_r\right) + \expectation[\Rc \mid \Rc \le \Uc]p_r \right)
\end{align*}
This admits two quick and useful interpretations:
\begin{align}
    \expectation[\Tc] & = \frac{\expectation[\Uc \wedge \Rc]}{1 - p_r},\text{ and} \label{eq:min_formula)}\\
    \expectation[\Tc] & = \expectation[\Uc \mid \Rc > \Uc] + \frac{p_r}{1-p_r}\expectation[\Rc \mid \Rc \le \Uc]\label{eq:num_restart_formula}.
\end{align}
The first is very concise, and the second gives a clearer picture of the restart's effect, where $\frac{p_r}{1-p_r}$ is the expected number of restarts before the FPUR process reaches the target set. We will return to these formulas in later sections.

Before progressing, we must also offer an important definition. We call a restart \underline{preemptive} if $\Rc \le \Uc$ almost surely, or $p_r = 1$. This is a property not just of the restart distribution (in general), but of the interplay between the restart and underlying FP time distributions. In particular, we mostly seek to consider FPUR processes with \underline{non-preemptive} restart, since the alternative is often uninteresting. Since our choice in (\ref{eq:recursive_definition}) means that $\Rc\le\Uc$ prevents the process from terminating, any process with preemptive restart will almost surely never terminate and thus have an infinite mean hitting time. One particularly pathological case is when the underlying process cannot finish in finite time. If $\Uc=\infty$ almost surely, then we say that any restart distribution will be preemptive. To avoid this issue, we assume that $\probability(\Uc=\infty)<1$ in all that follows, unless otherwise noted.

\section{Obtaining the Generating Function for $\Tc$}\label{sec:PGF_for_T}
When characterizing a discrete random variable, $X$, the usefulness of its Probability Generating Function (PGF) can hardly be overstated. Denoting the probability mass function of $X$ by $x(n)$ allows us to define its PGF as follows.
\begin{equation*}
    \tilde x(z) = \sum_{n\ge0} x(n)z^n
\end{equation*}
This expression as a $z$-transform of $x(n)$ allows us to employ numerous techniques for power series with positive coefficients. Additionally, one can easily determine the $k$-th factorial moment of $X$ (that is, $\expectation[X(X-1)\ldots(X-k+1)])$ by taking the corresponding derivative and evaluating the result at $z=1$. Perhaps most usefully, we can evaluate $\tilde x(z)$ and its first derivative to obtain:
\begin{itemize}
    \item $\tilde x(1) = \sum_{n\ge0}x(n) = \probability(X<\infty)$, and
    \item $\tilde x'(1) = \sum_{n\ge0}nx(n) = \expectation[X]$, so long as $\tilde x(1)=1$.
\end{itemize}
It's worth noting that we might generally expect that $\tilde x(1) = 1$, or that the sum of the probability mass equals 1. It is not, however, necessary that this is the case. In particular, if there is some nonzero probability that $X$ does not occur in finite time (such as when our underlying process might escape to infinity), then $\tilde x(1)$ will be the complement of that probability, denoted $\Ec_X \coloneqq \probability(X\text{ is finite})$, often called the hitting probability.

When we address the topic of hitting time for a stochastic process, the probability generating function gives us a powerful tool for characterizing the first passage characteristics. Thus, the ability easily to write the PGF for $\Tc$, denoted $\tilde t(z)$, using only the PGF of $\Uc$ and $\Rc$ would be very valuable. The following lemma provides a formula for $\tilde t(z)$ given $\tilde u(z)$ and $\tilde r(z)$.

\begin{lemma}\label{lemma-pdf_FPUR_gf}
Given the PGF for $\Uc$ and $\Rc$, we can write the probability generating function for $\Tc$ as
\begin{equation*}
    \tilde t(z) = \frac{\tilde u(z) - \sum_{n=0}^\infty z^nu(n)\sum_{i=0}^n r(i)}{1 - \tilde r(z)(1-\Ec_\Uc) - \sum_{n=0}^\infty u(n)\sum_{i=0}^n z^ir(i)}.
\end{equation*}
\end{lemma}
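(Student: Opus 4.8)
The plan is to condition on the outcome of the race between $\Uc$ and $\Rc$ directly in the recursive definition (\ref{eq:recursive_definition}), translating each case into a statement about generating functions. Writing $t(n) = \probability(\Tc = n)$, I would split according to whether the first passage wins ($\Rc > \Uc$) or the restart wins ($\Rc \le \Uc$). Since $\Uc$ and $\Rc$ are independent, the event $\{\Uc = n,\ \Rc > n\}$ has probability $u(n)\bigl(1 - \sum_{i=0}^{n} r(i)\bigr)$ and contributes $n$ to $\Tc$, while the event $\{\Rc = i \le \Uc\}$ for $i \le n$ (with $\Uc$ allowed to be anything $\ge i$, including $\infty$) leads to a fresh, independent copy $\Tc^*$ added on top of the delay $i$. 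The key point is that summing $u(n)$ over all $n \ge i$ (including the atom at infinity) yields the tail probability $\probability(\Uc \ge i) = 1 - \sum_{n=0}^{i-1} u(n)$, but it is cleaner to keep the sums over $n$ finite and carry the $\Ec_\Uc$ bookkeeping separately, which is exactly what produces the $\tilde r(z)(1-\Ec_\Uc)$ term in the denominator.

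Concretely, the first step is to establish the convolution-type identity
\begin{equation*}
    \tilde t(z) = \sum_{n=0}^{\infty} z^n u(n)\Bigl(1 - \sum_{i=0}^{n} r(i)\Bigr) + \Bigl(\sum_{i=0}^{\infty} z^i r(i)\,\probability(\Uc \ge i)\Bigr)\tilde t(z),
\end{equation*}
where the first sum accounts for the ``$\Rc > \Uc$'' branch and the second, using the independence of $\Tc^*$ from $(\Uc, \Rc)$ and the law of total probability over the restart time $i$, accounts for the ``$\Rc \le \Uc$'' branch. The second step is to simplify the coefficient of $\tilde t(z)$ on the right: writing $\probability(\Uc \ge i) = \Ec_\Uc - \sum_{n=0}^{i-1} u(n)$ does not immediately match the stated form, so instead I would write $\probability(\Uc \ge i) = 1 - \probability(\Uc \le i - 1) = 1 - \sum_{n < i} u(n)$ is not quite it either; the honest route is to note $\sum_{i=0}^\infty z^i r(i)\,\probability(\Uc \ge i) = \tilde r(z) - \sum_{i=0}^\infty z^i r(i)\sum_{n=0}^{i-1} u(n)$ and then swap the order of summation in the double sum so that it becomes $\sum_{n=0}^\infty u(n)\sum_{i > n} z^i r(i) = \sum_{n=0}^\infty u(n)\bigl(\tilde r(z) - \sum_{i=0}^n z^i r(i)\bigr)$. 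Using $\sum_{n=0}^\infty u(n) = \Ec_\Uc$, this collapses to $\tilde r(z)\Ec_\Uc - \tilde r(z) + \tilde r(z) + \ldots$; carefully collecting terms gives the coefficient $\tilde r(z)(1 - \Ec_\Uc) + \sum_{n=0}^\infty u(n)\sum_{i=0}^n z^i r(i)$, which is exactly the subtracted quantity in the denominator.

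The third step is purely algebraic: solve the resulting linear equation $\tilde t(z) = N(z) + C(z)\tilde t(z)$ for $\tilde t(z) = N(z)/(1 - C(z))$, where $N(z) = \tilde u(z) - \sum_{n=0}^\infty z^n u(n)\sum_{i=0}^n r(i)$ is the numerator and $C(z)$ is the coefficient identified above. Recognizing $\sum_{n=0}^\infty z^n u(n) = \tilde u(z)$ finishes the numerator, and the denominator is $1 - C(z)$ as stated.

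I expect the main obstacle to be the careful handling of the defective case $\Ec_\Uc < 1$, i.e. the possibility that $\Uc = \infty$. One must make sure that the atom of $\Uc$ at infinity is correctly assigned to the restart branch (a restart at finite time $i$ still ``wins'' against $\Uc = \infty$, since $\Rc \le \Uc$ holds), and that the tail-sum manipulation $\sum_{i}\sum_{n \ge i}$ versus $\sum_{n}\sum_{i \le n}$ respects this — which is precisely why the $\Ec_\Uc$ (rather than $1$) shows up multiplying $\tilde r(z)$. A secondary, more routine concern is justifying the interchange of the order of summation in the double sum, which follows from absolute convergence for $|z| < 1$ (all coefficients are nonnegative and bounded by a convergent series), together with a remark that the identity then extends to $z = 1$ by Abel's theorem whenever the relevant series converge there.
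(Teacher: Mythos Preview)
Your proposal is correct and follows essentially the same route as the paper: condition on the race $\{\Rc>\Uc\}$ versus $\{\Rc\le\Uc\}$ using the recursive definition, pass to generating functions to obtain a linear equation $\tilde t(z)=N(z)+C(z)\tilde t(z)$ via the convolution structure, then swap the order of the double sum (using $\sum_n u(n)=\Ec_\Uc$) to put $C(z)$ in the stated form. The only cosmetic difference is that the paper first writes the identity at the level of the pmf $t(n)$ and then applies the $z$-transform, whereas you work directly with generating functions; the substance, including the handling of the defective case via the $\tilde r(z)(1-\Ec_\Uc)$ term, is identical.
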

\begin{proof}
Directly from (\ref{eq:recursive_definition}), we can write 
\begin{align*}
    \probability(\Tc=n) & = \probability(\Tc=n \mid \Rc>\Uc)\cdot\probability(\Rc>\Uc) \\
    & \quad + \probability(\Tc = n \mid \Rc\le\Uc)\cdot\probability(\Rc\le\Uc) \\
    & = \probability(\Uc=n \text{ and } \Uc<\Rc) \\
    & \quad + \probability(\Rc+\Tc^*=n \text{ and } \Rc\le \Uc).
\end{align*}

Recalling that $\Uc$, $\Rc$ and $\Tc^*$ are independent allows us to simplify to
\begin{align*}
    t(n) & \coloneqq \probability(\Tc=n) \\
    & = u(n)\left(1-\sum_{i=0}^n r(i)\right) \\
    & \quad + \sum_{i=0}^n \left(r(i)\left[1 - \sum_{j=0}^{i-1} u(j)\right]\right)t^*(n-i).
\end{align*}

We continue with a straightforward method for producing the generating function: taking the $z$-transform. Multiplying both sides of the preceding equation by $z^n$ and summing over $n\in\naturals$ gives 
\begin{align*}
    \sum_{n=0}^\infty z^n t(n) & = \sum_{n=0}^\infty z^nu(n)\left[1 - \sum_{i=0}^n r(i)\right] \\
    & + \sum_{n=0}^\infty z^n\sum_{i=0}^n \left(r(i)\left[1 - \sum_{j=0}^{i-1} u(j)\right]\right)t^*(n-i).
\end{align*}
On the left, we have $\tilde t(z)$ by definition. On the right-hand side, we can focus our attention on the second summand. We split the powers of $z$, change the order of the first two sums to obtain
\begin{align*}
    & \sum_{n=0}^\infty z^n\sum_{i=0}^n \left(r(i)\left[1 - \sum_{j=0}^{i-1} u(j)\right]\right)t^*(n-i) \\
    & = \sum_{n=0}^\infty \sum_{i=0}^n \left(z^ir(i)\left[1 - \sum_{j=0}^{i-1} u(j)\right]\right)z^{n-i}t^*(n-i) \\
    & = \sum_{i=0}^\infty \left(z^ir(i)\left[1 - \sum_{j=0}^{i-1} u(j)\right]\right) \sum_{n=i}^\infty z^{n-i}t^*(n-i) \\
    & = \sum_{i=0}^\infty \left(z^ir(i)\left[1 - \sum_{j=0}^{i-1} u(j)\right]\right) \tilde t^*(z) \\
    & = \tilde t(z) \sum_{i=0}^\infty \left(z^ir(i)\left[1 - \sum_{j=0}^{i-1} u(j)\right]\right),
\end{align*}
where the last step is possible because $\Tc^*$ is an identically distributed copy of $\Tc$. Replacing the second summand in the earlier expression and solving for $\tilde t(z)$ gives us the formula,
\begin{equation*}
    \tilde t(z) = \frac{\sum_{n=0}^\infty z^nu(n)\left[1 - \sum_{i=0}^n r(i)\right]}{1 - \sum_{i=0}^\infty \left(z^ir(i)\left[1 - \sum_{j=0}^{i-1} u(j)\right]\right)}.
\end{equation*}
This expression is actually sufficient for many useful calculations, but it can be convenient to rewrite it. From here, some simple algebra and another exchange of summation order gives us
\begin{align*}
    \tilde t(z) & = \frac{\sum_{n=0}^\infty z^nu(n) - \sum_{n=0}^\infty z^nu(n)\sum_{i=0}^n r(i)}{1 - \sum_{i=0}^\infty z^ir(i) + \sum_{i=0}^\infty z^ir(i)\sum_{j=0}^{i-1} u(j)} \\
    & = \frac{\tilde u(z) - \sum_{n=0}^\infty z^nu(n)\sum_{i=0}^n r(i)}{1 - \tilde r(z) + \sum_{j=0}^\infty u(j) \left[\tilde r(z) - \sum_{i=0}^j z^ir(i)\right]} \\
    & = \frac{\tilde u(z) - \sum_{n=0}^\infty z^nu(n)\sum_{i=0}^n r(i)}{1 - \tilde r(z)(1-\Ec_\Uc) - \sum_{n=0}^\infty u(n)\sum_{i=0}^n z^ir(i)}.
\end{align*}
\end{proof}

\subsection{Two common restart mechanisms}
With the formula for $\tilde t(z)$, it might behoove us to cover two common distributions for restart: a geometric distribution with constant rate, $\rho$, and a deterministic or sharp distribution at some constant time, $N$. These two distributions are of particular interest for several reasons, not least among them that we can actually compute some useful results.

\subsubsection{The geometric restart}
In this text, we define a geometric distribution by its cumulative mass function as $R(n) = 1-(1-\rho)^n$ for $n\in\naturals$ with constant rate parameter $\rho\in(0,1)$. We consider the limiting cases $\rho\to0$ and $\rho\to1$ to be no restart and restart every step, respectively. From the cumulative distribution, we can easily write down the probability mass function, $r(n) = \rho(1-\rho)^{n-1}$ for $n\in\integers^+\coloneqq\{1,2,3,\ldots\}$, and the $z$-transform, $\tilde r(z) = \frac{\rho z}{1-(1-\rho)z}$. An observation worth making for the geometric restart is that $\probability(\Rc\le\Uc)<1$ and restart is non-preemptive for $\rho\in(0,1)$. In the following sections, many results depend on non-preemptive restart, so the geometric distribution is often a good candidate for experimentation. We also want to draw attention to the fact that the support of $\Rc$ is $\integers^+$ in this definition. This is in contrast to some other works (e.g. \cite{bonomo2021first}) that use an alternative parameterization of the distribution so that $\Rc\in\naturals$.

\subsubsection{The sharp restart}
An even simpler distribution can be defined with cumulative mass function $R(n) = \mathbbm{1}_{[N,\infty)}(n) = \begin{cases}1 & n \ge N \\ 0 & n < N\end{cases}$ with $n\in\naturals$ for some parameter $N\in\integers^+$. This admits probability mass function $r(n) = \delta_{n,N}$ and PGF $\tilde r(z) = z^N$. In contrast to the geometric restart, this sharp restart can be preemptive. To illustrate:  consider the case where the underlying stochastic process can reach its target set no earlier than time $m$. If $N\le m$, then $\probability(\Rc\le\Uc)=1$, the FPUR cannot ever reach the target set, and $\Tc$ is almost surely infinite.

\section{Hitting Probabilities and Recurrence}\label{sec:hiting_probabilities}
With the generating function for $\Tc$ given by Lemma \ref{lemma-pdf_FPUR_gf}, we can look at how the hitting probability of the FP process is changed by adding an arbitrary restart mechanism. We simply evaluate the generating function at $z=1$, which recovers exactly $\Ec_\Tc = \sum_{n\in\naturals}t(n)$.

\begin{lemma}\label{lemma-hit_t}
The hitting probability of the FPUR process is given by 
\begin{equation*}
    \Ec_\Tc = \frac{\Ec_\Uc - \sum_{n=0}^\infty u(n)R(n)}{1 - \Ec_\Rc(1-\Ec_\Uc) - \sum_{n=0}^\infty u(n)R(n)},
\end{equation*}
when $d \coloneqq 1 - \Ec_\Rc(1-\Ec_\Uc) - \sum_{n=0}^\infty u(n)R(n)\neq0$. Otherwise $\Ec_\Tc=0$.
\end{lemma}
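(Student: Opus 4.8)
The plan is to read off $\Ec_\Tc$ by evaluating the generating function of Lemma \ref{lemma-pdf_FPUR_gf} at $z=1$. First I would observe that $\tilde t(z)=\sum_{n\ge 0}t(n)z^n$ has nonnegative coefficients with sum $\Ec_\Tc\le 1$, so by Abel's theorem $\Ec_\Tc=\tilde t(1)=\lim_{z\to1^-}\tilde t(z)$; similarly, every series occurring in the numerator and denominator of the closed form has nonnegative coefficients that are absolutely summable at $z=1$ (each is bounded above by $\Ec_\Uc$ or by $\Ec_\Rc$), and is therefore continuous on $[0,1]$. Using $\tilde u(1)=\Ec_\Uc$, $\tilde r(1)=\Ec_\Rc$, and $\sum_{i=0}^n r(i)=R(n)$, the numerator then converges to $\Ec_\Uc-\sum_{n\ge 0}u(n)R(n)$ and the denominator to $d$, so when $d\neq 0$ the stated formula follows at once.

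The case $d=0$ is the only genuine subtlety, since the limiting ratio is then the indeterminate $0/0$. The key step is to rewrite the denominator by splitting $1=(1-\Ec_\Uc)+\Ec_\Uc$, which yields
\[
  d \;=\; (1-\Ec_\Uc)(1-\Ec_\Rc) \;+\; \Bigl(\Ec_\Uc-\sum_{n\ge 0}u(n)R(n)\Bigr).
\]
Both summands on the right are nonnegative — the first because $\Ec_\Uc,\Ec_\Rc\le 1$, the second because $R(n)\le 1$ forces $\sum_n u(n)R(n)\le\sum_n u(n)=\Ec_\Uc$ — so $d=0$ forces the numerator $\Ec_\Uc-\sum_n u(n)R(n)$ to vanish as well.

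It then remains to explain why $\Ec_\Tc=0$ in this case. I would recognize the vanishing numerator as a probability: $\Ec_\Uc-\sum_n u(n)R(n)=\sum_n u(n)\bigl(1-R(n)\bigr)=\probability(\Rc>\Uc)=1-p_r$. Hence $d=0$ implies $p_r=1$, i.e.\ the restart is preemptive, and by the discussion in Section \ref{sec:definitions} the FPUR process then almost surely never reaches $B$, so $\Ec_\Tc=\probability(\Tc<\infty)=0$, as asserted. The main obstacle is thus purely technical — justifying that the $z\to1^-$ limit may be passed through the infinite sums via the Abel-continuity argument above — after which the proof reduces to the one-line regrouping of $d$ and the probabilistic reading of its two pieces.
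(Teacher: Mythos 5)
Your proof is correct and follows essentially the same route as the paper's: evaluate the PGF of Lemma~\ref{lemma-pdf_FPUR_gf} at $z=1$, then analyze $d=0$ via a sign decomposition showing it forces preemptive restart, at which point Section~\ref{sec:definitions} gives $\Ec_\Tc=0$. The only cosmetic difference is the regrouping of $d$: you write $d=(1-\Ec_\Uc)(1-\Ec_\Rc)+\bigl(\Ec_\Uc-\sum_n u(n)R(n)\bigr)$ with both terms nonnegative (which has the pleasant side effect of making the numerator's vanishing manifest and directly identifying $p_r=1$), whereas the paper rearranges $d=0$ into $1-\Ec_\Rc=\sum_n u(n)\left[R(n)-\Ec_\Rc\right]$ with a nonnegative left side and nonpositive right side; these are equivalent and lead to the same conclusion.
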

\begin{proof}
The formula itself is an immediate result of Lemma \ref{lemma-pdf_FPUR_gf}, so we merely address the circumstances under which $d$ equals $0$ and show that $\Ec_\Tc=0$ in that case. Setting the denominator to zero gives us
\begin{align*}
    0 & = 1 - \Ec_\Rc(1-\Ec_\Uc) - \sum_{n=0}^\infty u(n)R(n) \\
    1-\Ec_\Rc & = \sum_{n=0}^\infty u(n)R(n) - \Ec_\Uc\Ec_\Rc \\
    1-\Ec_\Rc & = \sum_{n=0}^\infty u(n)[R(n) - \Ec_\Rc].
\end{align*}
The left-hand side is clearly nonnegative, and the right-hand side is clearly nonpositive, which indicates that $d\ge0$, with equality only when both of the following conditions are met:
\begin{itemize}
    \item $\Ec_\Rc = 1$, and
    \item $R(n)=\Ec_\Rc$ for all $n$ in the support of $u(n)$.
\end{itemize}
Put plainly, this is the case in which the restart is assured and must occur with probability 1 before the underlying process has any chance to reach the target set, which is the definition of preemptive restart. Notice that $d>0$ whenever $\Ec_\Rc<1$. 
\end{proof}

This lemma indicates that, when restart is preemptive, the hitting probability of the FPUR process becomes 0, exactly as one might expect. In the case of non-preemptive restart, however, we can say a bit more.

\begin{theorem}\label{theo-T_iff_U_or_R}
    For a FPUR process with non-preemptive restart, $\Ec_\Tc=1$ iff at least one of $\Ec_\Uc$ and $\Ec_\Rc$ is $1$.
\end{theorem}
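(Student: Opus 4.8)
The plan is to read the equivalence straight off the closed form in Lemma~\ref{lemma-hit_t}. The first thing to check is that the non-preemptive hypothesis makes that formula usable: the proof of Lemma~\ref{lemma-hit_t} already showed that the denominator $d$ vanishes only in the preemptive case, so under our hypothesis $d>0$ and the identity
\begin{equation*}
    \Ec_\Tc = \frac{\Ec_\Uc - \sum_{n=0}^\infty u(n)R(n)}{1 - \Ec_\Rc(1-\Ec_\Uc) - \sum_{n=0}^\infty u(n)R(n)}
\end{equation*}
holds with a genuine (finite, positive) denominator. I would also record the identity $\Ec_\Uc - \sum_n u(n)R(n) = \sum_n u(n)(1-R(n)) = \probability(\Uc<\Rc) = 1-p_r$ for the numerator; this is strictly positive exactly because the restart is non-preemptive, and that is the only other place the hypothesis is used.

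The next step is purely algebraic: $\Ec_\Tc=1$ iff the numerator and denominator of the displayed fraction coincide. Cancelling the common term $\sum_n u(n)R(n)$ collapses that equation to $\Ec_\Uc = 1-\Ec_\Rc(1-\Ec_\Uc)$, and expanding and regrouping yields $(1-\Ec_\Uc)(1-\Ec_\Rc)=0$. Since $0\le\Ec_\Uc\le1$ and $0\le\Ec_\Rc\le1$, both factors are nonnegative, so this holds iff $\Ec_\Uc=1$ or $\Ec_\Rc=1$, which is the statement. No separate converse argument is needed: if one of the hitting probabilities equals $1$ the same manipulation shows numerator $=$ denominator $= 1-p_r>0$, so $\Ec_\Tc$ is genuinely $1$ rather than an indeterminate $0/0$.

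There is no real computational obstacle here; all the delicate bookkeeping about infinite values of $\Uc$ and $\Rc$ is already absorbed into Lemma~\ref{lemma-hit_t}. As a sanity check, and arguably a more transparent derivation, I would also like to argue probabilistically: decompose the FPUR into i.i.d.\ \emph{rounds}, where a round terminates the process with probability $1-p_r=\probability(\Uc<\Rc)$, spawns another round with probability $\probability(\Rc\le\Uc,\,\Rc<\infty)$, or runs forever without restarting with probability $\probability(\Uc=\infty,\,\Rc=\infty)=(1-\Ec_\Uc)(1-\Ec_\Rc)$, these three being exhaustive. Non-preemptiveness ($p_r<1$) forces finitely many rounds almost surely, so summing the geometric series over the index of the terminating round gives
\begin{equation*}
    \Ec_\Tc = \frac{1-p_r}{(1-p_r)+(1-\Ec_\Uc)(1-\Ec_\Rc)},
\end{equation*}
which equals $1$ iff $(1-\Ec_\Uc)(1-\Ec_\Rc)=0$. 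I expect the only genuine ``obstacle'' to be presentational: deciding whether to invoke Lemma~\ref{lemma-hit_t} directly or to spell out this rounds argument, and, if the latter, stating the cross-round independence of $\Uc$ and $\Rc$ cleanly.
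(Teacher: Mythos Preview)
Your primary argument is correct and is essentially the paper's own proof: invoke Lemma~\ref{lemma-hit_t}, use non-preemptiveness to ensure $d>0$, and reduce $\Ec_\Tc=1$ to $(1-\Ec_\Uc)(1-\Ec_\Rc)=0$; the paper handles the converse by two explicit substitutions rather than your reversibility observation, but the content is identical. Your auxiliary ``rounds'' derivation and the identification of the numerator as $1-p_r$ are clean additions that do not appear in the paper.
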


\begin{proof}
    First, suppose that $\Ec_\Tc=1$. Since the restart is non-preemptive, we may multiply both sides by the denominator to obtain the following sequence of equalities.
    \begin{align*}
        1 - \Ec_\Rc(1-\Ec_\Uc) - \sum_{n=0}^\infty u(n)R(n) & = \Ec_\Uc - \sum_{n=0}^\infty u(n)R(n) \\
        1 - \Ec_\Rc(1-\Ec_\Uc) & = \Ec_\Uc \\
        1 - \Ec_\Rc - \Ec_\Uc + \Ec_\Rc\Ec_\Uc & = 0 \\
        (1 - \Ec_\Rc)(1 - \Ec_\Uc) & = 0
    \end{align*}
    Thus we must have at least one of $\Ec_\Rc$ and $\Ec_\Uc$ equal to 1. \\
    Next, we have two cases:
    \begin{itemize}
        \item Suppose $\Ec_\Rc = 1$. \\ Then we have $\Ec_\Tc = \frac{\Ec_\Uc - \sum_{n=0}^\infty u(n)R(n)}{1 - 1\cdot(1-\Ec_\Uc) - \sum_{n=0}^\infty u(n)R(n)} = \frac{\Ec_\Uc - \sum_{n=0}^\infty u(n)R(n)}{\Ec_\Uc - \sum_{n=0}^\infty u(n)R(n)}=1$.
        \item Suppose $\Ec_\Uc = 1$. \\Then we have $\Ec_\Tc = \frac{1 - \sum_{n=0}^\infty u(n)R(n)}{1 - \Ec_\Rc(1-1) - \sum_{n=0}^\infty u(n)R(n)} = \frac{1 - \sum_{n=0}^\infty u(n)R(n)}{1 - \sum_{n=0}^\infty u(n)R(n)}=1$.
    \end{itemize}
\end{proof}

There are many consequences to this theorem, but we'll take a moment to note an important one.

\begin{proposition}\label{prop:rec}
    Given an underlying discrete stochastic process, $\mathcal{W}_n$, and a restart mechanism with $\Ec_\Rc=1$, any terminal point that can be reached in finite time by $\mathcal{W}_n$ becomes recurrent for $\mathcal{W}_n^*$, provided the restart is non-preemptive.
\end{proposition}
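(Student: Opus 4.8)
The plan is to recast recurrence of $b$ as a statement about an i.i.d.\ sequence of restart epochs and then close it with the second Borel--Cantelli lemma. Fix a terminal point $b$ of $\Wc_n$ that is reachable in finite time and run the FPUR construction of Section \ref{sec:definitions} with target set $\{b\}$, so that $\Uc$ denotes the hitting time of $b$ by $\Wc_n$; reachability means $\Ec_\Uc>0$, and non-preemptiveness means $p_r=\probability(\Rc\le\Uc)<1$, equivalently $\probability(\Uc<\Rc)=1-p_r>0$. Since $\Ec_\Rc=1$, Theorem \ref{theo-T_iff_U_or_R} already gives $\Ec_\Tc=1$: started in $A$, $\Wc_n^*$ reaches $b$ in finite time almost surely. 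Because recurrence of $b$ for $\Wc_n^*$ means that $b$ is visited infinitely often almost surely, the remaining task is to upgrade ``reaches $b$ once'' to ``reaches $b$ infinitely often''.

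For that I would let $\Wc_n^*$ run forever rather than stopping it at $b$, and decompose its trajectory along the successive restart times $0=\sigma_0<\sigma_1<\sigma_2<\cdots$, all of which are finite almost surely because $\Ec_\Rc=1$, so the trajectory passes through infinitely many epochs. On the $j$-th epoch, by the restart construction (\ref{eq:recursive_definition}) the process runs as an independent fresh copy of $\Wc_n$ started in $A$ for an independent $\Rc$-distributed number of steps; writing $\Uc_j$ and $\Rc_j$ for the hitting time of $b$ and the restart time in that epoch, the events $E_j=\{\Uc_j<\Rc_j\}$ are independent with $\probability(E_j)=\probability(\Uc<\Rc)=1-p_r>0$. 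On $E_j$ the process reaches $b$ strictly before the epoch ends and, $b$ being terminal, remains at $b$ for the rest of the epoch, so $E_j$ genuinely records a visit to $b$. Since $\sum_j\probability(E_j)=\infty$, the second Borel--Cantelli lemma forces infinitely many $E_j$ to occur almost surely, whence $\Wc_n^*$ visits $b$ at infinitely many times almost surely. (Equivalently, one can iterate Theorem \ref{theo-T_iff_U_or_R}: the continuation past the first restart after the first visit to $b$ is, by (\ref{eq:recursive_definition}), an independent copy of the whole process, so the theorem applies again to produce another visit in finite time, and induction yields infinitely many.)

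The one genuinely delicate point is that $\Wc_n^*$ is in general \emph{not} a Markov chain --- with sharp restart, for instance, it is not --- so the usual recurrence dichotomy for Markov chains is unavailable, and the argument must instead rest entirely on the i.i.d.\ renewal structure at restart times, which is exactly what (\ref{eq:recursive_definition}) encodes. Two smaller points should be stated cleanly: the tie-breaking convention (a passage to $b$ landing exactly on a restart is overridden) is harmless, since the argument uses only the strict inequality $\probability(\Uc<\Rc)>0$, which is precisely non-preemptiveness; and the reachability hypothesis, although listed separately, is in fact forced by non-preemptiveness (if $\Uc=\infty$ almost surely then $\Rc\le\Uc$ almost surely, i.e.\ the restart would be preemptive), so it adds no real restriction.
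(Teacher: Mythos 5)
The paper states this proposition as an unproven corollary of Theorem~\ref{theo-T_iff_U_or_R}, so there is no proof in the paper to compare against line-by-line; the natural reading of the paper's intent is exactly the parenthetical argument you give at the end (iterate Theorem~\ref{theo-T_iff_U_or_R} through the successive renewal cycles). Your main argument --- partitioning the trajectory into i.i.d.\ restart epochs and invoking Borel--Cantelli II on the events $\{\Uc_j<\Rc_j\}$ --- is a correct and slightly more self-contained route to the same conclusion: it dispenses with Theorem~\ref{theo-T_iff_U_or_R} entirely for the ``infinitely often'' step and rests only on the renewal structure in~(\ref{eq:recursive_definition}), whereas the iteration argument leans on the theorem at every stage. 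The supporting remarks are also sound: $\Ec_\Rc=1$ guarantees infinitely many finite restart epochs, a terminal $b$ stays put until the next restart so $E_j$ really does witness a visit, the tie-breaking convention is irrelevant since only $\probability(\Uc<\Rc)>0$ is used, and reachability is indeed implied by non-preemptiveness once $\Rc<\infty$ a.s. The observation that $\Wc_n^*$ need not be Markov (e.g.\ under sharp restart) and that one must therefore argue via the renewal structure rather than a recurrence dichotomy is a genuinely useful point that the paper does not make explicit.
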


This proposition highlights the value of our geometric restart mechanism. Since the PGF for $\Rc$ is $\tilde r(z) = \frac{\rho z}{1-(1-\rho)z}$, we can immediately check that $\Ec_\Rc = \tilde r(1) = 1$. Since geometric restart is furthermore non-preemptive as discussed at the end of Section \ref{sec:PGF_for_T}, Proposition \ref{prop:rec} tells us that the FPUR is recurrent for every state it can reach in finite time, even when the underlying FP process is not!

\section{Hitting Times}\label{sec:hitting_times}
While understanding the hitting probability is a critical step in analyzing the first passage statistics of a stochastic process, our goal is often to compute the expected hitting time. One option is to differentiate the expression from Lemma \ref{lemma-pdf_FPUR_gf} and then evaluate at $z=1$, which gives expressions for $\expectation[\Tc]$ that are equivalent to those given in Section \ref{sec:definitions}. Depending on the complexity of $\tilde r(z)$, $\tilde u(z)$, $r(n)$ and $u(n)$, however, evaluating the derivative might be the easier approach. Any of these formulations will permit an extension to Proposition \ref{prop:rec}.

\begin{proposition}\label{prop:rec2}
    Given an underlying discrete stochastic process, $\mathcal{W}_n$, and a restart mechanism with $\expectation[\Rc]<\infty$, any terminal point that can be reached in finite time by $\mathcal{W}_n$ becomes positive recurrent for $\mathcal{W}_n^*$, provided the restart is non-preemptive.
\end{proposition}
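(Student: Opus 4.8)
The plan is to combine the recurrence statement of Proposition \ref{prop:rec} with the mean formula (\ref{eq:min_formula)}). First note that the hypothesis $\expectation[\Rc]<\infty$ is strictly stronger than the hypothesis of Proposition \ref{prop:rec}: a nonnegative random variable with finite mean is almost surely finite, so $\Ec_\Rc=1$. Hence Proposition \ref{prop:rec} already applies; writing $B$ for the singleton target set consisting of the terminal point in question, $B$ is recurrent for $\Wc_n^*$. What remains is to upgrade recurrence to positive recurrence, i.e.\ to show that the expected time between consecutive visits of $\Wc_n^*$ to $B$ is finite.

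The core estimate is $\expectation[\Tc]<\infty$, where $\Tc$ is the first passage time of the restarted process from $A$ to $B$. By (\ref{eq:min_formula)}), $\expectation[\Tc]=\expectation[\Uc\wedge\Rc]/(1-p_r)$, an identity valid in $[0,\infty]$. The numerator is finite because $\Uc\wedge\Rc\le\Rc$ pointwise, so $\expectation[\Uc\wedge\Rc]\le\expectation[\Rc]<\infty$; note this holds even when $\probability(\Uc=\infty)>0$, precisely because the minimum is dominated by the almost surely finite variable $\Rc$. The denominator is positive because non-preemptivity means $p_r=\probability(\Rc\le\Uc)<1$ by definition. Therefore $\expectation[\Tc]<\infty$, which in particular re-confirms $\Ec_\Tc=1$, consistent with Theorem \ref{theo-T_iff_U_or_R}.

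Finally I would assemble this into positive recurrence. Decompose a generic return time to $B$ as $W+\Tc'$, where $W$ is the time $\Wc_n^*$ idles at the (absorbing) point $B$ before the pending restart fires, and $\Tc'$ is an independent copy of $\Tc$ governing the subsequent excursion from $A$ back to $B$. Since the process reaches $B$ through the branch $\Rc>\Uc$ of (\ref{eq:recursive_definition}), the residual wait is $W=\Rc-\Uc$ conditioned on $\{\Rc>\Uc\}$, so $\expectation[W]=\expectation[(\Rc-\Uc)\mathbbm{1}_{\{\Rc>\Uc\}}]/(1-p_r)\le\expectation[\Rc]/(1-p_r)<\infty$. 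Hence $\expectation[W+\Tc']<\infty$, and the same bound applies uniformly to every inter-visit interval, so $B$ is positive recurrent. The step I expect to be the main obstacle is this last one: one must pin down exactly what ``return time'' means for the non-terminating process $\Wc_n^*$ and verify finiteness of the idle wait $W$ — here the naive worry that a residual restart interval demands a second moment of $\Rc$ is sidestepped, since the interval in progress when $B$ is hit is truncated by absorption rather than length-biased, and only $\expectation[\Rc]<\infty$ is needed.
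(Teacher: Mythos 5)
Your argument is correct and its core matches the paper's proof exactly: both invoke Proposition \ref{prop:rec} for recurrence via $\expectation[\Rc]<\infty\Rightarrow\Ec_\Rc=1$, and both then read $\expectation[\Tc]<\infty$ off equation (\ref{eq:min_formula)}), since the numerator satisfies $\expectation[\Uc\wedge\Rc]\le\expectation[\Rc]<\infty$ and the denominator $1-p_r$ is positive by non-preemptivity. Where you go beyond the paper is the final paragraph: the paper stops at $\expectation[\Tc]<\infty$ and treats that as synonymous with positive recurrence of the terminal point, whereas you explicitly decompose a full return cycle into an idle wait $W$ at $B$ plus an independent copy $\Tc'$ of the excursion, and bound $\expectation[W]=\expectation[(\Rc-\Uc)\mathbbm{1}_{\{\Rc>\Uc\}}]/(1-p_r)\le\expectation[\Rc]/(1-p_r)<\infty$. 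This is a real point of rigor the paper elides — it implicitly treats arrival at $B$ as triggering an immediate reset so the return time is just $\Tc$, while you allow the process to sit at $B$ until the pending restart fires and then verify that this residual wait has finite mean. Your remark that only a first moment of $\Rc$ is needed, because the interval in progress is truncated by absorption rather than length-biased, is correct and worth keeping; it preempts a plausible objection. Either convention yields positive recurrence, so the extra step does not change the conclusion, but it does make explicit a choice of definition the paper leaves implicit.
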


\begin{proof}
    If we suppose that $\expectation[\Rc]<\infty$, then clearly $\Ec_\Rc = 1$ and we have recurrence by Proposition \ref{prop:rec}. To show positive recurrence, take equation (\ref{eq:min_formula)}): $\expectation[\Tc] = \frac{\expectation[\Uc \wedge \Rc]}{1-p_r}$. So long as $\expectation[\Rc]$ is finite and $p_r<1$, then $\expectation[\Tc]<\infty$.
\end{proof}

In general, it is not easy to compute expressions for the hitting time. Our two restarts from Section \ref{sec:PGF_for_T}, however, do allow for relatively simple formulations.

\subsection{Geometric restart} \label{sec-geom_formula}
Substituting $r(n) = \rho(1-\rho)^{n-1}$ for $n\ge1$ and $\tilde r(z) = \frac{\rho z}{1 - (1-\rho)z}$ into the formula from Lemma \ref{lemma-pdf_FPUR_gf} gives us the following PGF for $\Tc$:
\begin{equation*}
    \tilde t(z) = \frac{\tilde u((1-\rho)z)}{1-\frac{\rho z}{1-(1-\rho)z}\left(1  - \tilde u((1-\rho)z)\right)}.
\end{equation*}
Taking the derivative with respect to $z$ and then evaluating at $z=1$ gives us the formula for the mean hitting time below.
\begin{equation}\label{eq:hitting_time-geom}
    \expectation[\Tc] = \frac{1 - \tilde u(1-\rho)}{\rho \tilde u(1-\rho)}
\end{equation}
It's worth taking a moment to comment on how wonderful this expression is. It's not only concise, it also allows us to compute $\expectation[\Tc]$ with only the PGF for $\Uc$ \textemdash no need for taking further derivatives or computing any partial sums of the probability mass function.

We can also examine both of the limiting cases for $\rho$. Using L'H\^{o}pital's Rule to take $\rho\to0$, we find that $\expectation[\Tc]\to\expectation[\Uc]$. This matches nicely with our earlier interpretation: the $\rho\to0$ case should correspond to having no restart at all. In the other direction, taking $\rho\to1$ gives $\expectation[\Tc]\to\frac{1-u(0)}{u(0)}$, except that we assumed $u(0)=\probability(\Uc=0)=0$ as discussed in Section \ref{sec:introduction}. Thus, we have $\expectation[\Tc]\to\infty$ and restart is preemptive. We note that in \cite{bonomo2021first} Bonomo and Pal derive a similar result. It differs from ours slightly, but only as a result of the choice of strict inequality in (\ref{eq:recursive_definition}) and our subsequent parameterization of the geometric distribution starting at 1 instead of 0.

Before moving to the sharp restart, we recall that geometric restart has $\expectation[\Rc]=\frac{1}{\rho}<\infty$, and thus a FPUR equipped with geometrically distributed restart is in fact positive recurrent for any state the underlying process can reach in finite time by Proposition \ref{prop:rec2}. The sharp restart has the same property, but as previously noted can be preemptive, which we see in the next section.

\subsection{Sharp restart} \label{sec-sharp_formula}
Just as for the geometric restart, we insert our probability mass function and PGF into the formula from Lemma \ref{lemma-pdf_FPUR_gf}. With $r(n)=\delta_{n,N}$ and $\tilde r(z) = z^N$, we have 
\begin{align*}
    \tilde t(z) & = \frac{\tilde u(z) - \sum_{n=0}^\infty z^nu(n)\mathbbm{1}_{[N,\infty)}(n)}{1-z^N(1-\Ec_\Uc) - \sum_{n=0}^\infty u(n) z^N \mathbbm{1}_{[N,\infty)}(n)} \\
    & = \frac{\tilde u(z) - \sum_{n=N}^\infty z^nu(n)}{1-z^N(1-\Ec_\Uc) - z^N\sum_{n=N}^\infty u(n)} \\
    & = \frac{\sum_{n=0}^{N-1}z^nu(n)}{1 - z^N\left( 1 - \sum_{n=0}^{N-1}u(n) \right)}
\end{align*}
Differentiating and evaluating at $z=1$ gives us the mean first passage time (which we can recognize as (\ref{eq:min_formula)}) or (\ref{eq:num_restart_formula})  where $p_r = 1-U(N-1)$).
\begin{equation}\label{eq:hitting_time-sharp}
    \expectation[\Tc] = \frac{\sum_{n=0}^{N-1}nu(n) + N(1 - U(N-1))}{U(N-1)}
\end{equation}

Just as with the geometric restart, we're interested in the cases corresponding to instantaneous restart and no restart, in this case $N=1$ and $N\to\infty$, respectively. For the no restart case, we can see $\lim_{N\to\infty}\expectation[\Tc](N)=\expectation[\Uc]$, as expected. On the other hand $\expectation[\Tc]$ is undefined when $N=1$ since $U(0)$ is 0 by assumption. In that case, we have $\probability(\Rc\le\Uc)=1$, and restart is preemptive.

 Also, since $\expectation[\Rc] = N < \infty$, the sharp restart also satisfies Proposition \ref{prop:rec2} and guarantees positive recurrence when restart is non-preemptive, i.e. $N$ must be strictly larger than the smallest value in the support of $u(n)$, or equivalently $U(N-1)>0$.

\section{When is $\expectation[\Tc] < \expectation[\Uc]$?}\label{sec:ET<EU}
In the context of applications, we are often concerned with whether adding a restart mechanism will speed up or slow down the underlying process, i.e., reduce or increase the expected arrival time in the target set. We say that a restart is \underline{beneficial} if the FPUR equipped with this mechanism has a smaller mean first passage time than the underlying FP process. Our goal then is to characterize the circumstances under which a restart is beneficial.

One simple way of determining whether a restart is beneficial in some parameter range is to examine the sign of $\expectation[\Tc]-\expectation[\Uc]$. Clearly, when this difference is negative, the restart is beneficial. Unfortunately, this expression doesn't admit any concise or easily computable form to check for arbitrary underlying and restart processes. Below, however, we do consider a different kind of criterion for the geometric restart and a useful special case for the sharp.

\subsection{Geometric restart and the derivative condition}

Let $\Rc\sim\geom(\rho)$. Then taking limits as $\rho$ tends to 0 and 1 of $\expectation[\Tc]=\frac{1-\tilde u(1-\rho)}{\rho\tilde u(1-\rho)}$ yields
\begin{align*}
    \lim_{\rho\to0}\expectation[\Tc] & = \expectation[\Uc] \\
    \lim_{\rho\to1}\expectation[\Tc] & = \infty,
\end{align*}
which we already noted in Section \ref{sec-geom_formula}. Thus, we can say that $\expectation[\Tc]$ atarts at $\expectation[\Uc]$ with $\rho = 0$ and tends to infinity as $\rho\to1$, perhaps increasing non-monotonically. Differentiating $\expectation[\Tc](\rho)$ with respect to $\rho$ and taking the limit as $\rho\to0$ produces the expression $D\coloneqq\frac{2\tilde u'(1)^2-\tilde u''(1)}{2}$. If $D<0$, then there clearly exists some interval of $\rho$ values (specifically $(0,\hat\rho)$ for some $\hat\rho\in(0,1)$) with $\expectation[\Tc]<\expectation[\Uc]$. Note that $\expectation[\Tc]$ may not be convex in $\rho$, so the restart could also be beneficial on some of $(\hat\rho,1)$, and $D\ge0$ does not guarantee that restart is not beneficial for some interval. That is, $D<0$ is sufficient, but not necessary, to ensure an interval where $\expectation[\Tc]<\expectation[\Uc]$. In the event that we can demonstrate that $\expectation[\Tc](\rho)$ is convex, we know that $D<0$ iff there exists some $\hat\rho\in(0,1)$ such that restart is beneficial for $\rho\in(0,\hat\rho)$.\\
\begin{figure}[h]
\includegraphics[width=8.6cm]{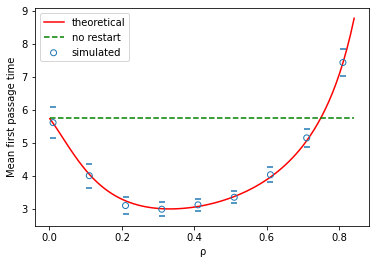}% Here is how to import EPS art
\caption{\label{fig:counter_Dneg} This plot shows $\expectation[\Tc](\rho)$ for selected values of $\rho$ between .01 and .841. $D<0$ is sufficient to guarantee an interval where $\expectation[\Tc]<\expectation[\Uc]$. Simulated values were averaged over 2000 trials, and we include 99\% confidence intervals.}
\end{figure}\\
\textbf{Example with $D<0$ (Figure \ref{fig:counter_Dneg})}\\
Consider the case of an underlying process that finishes at either time $1$ with probability $\frac{3}{4}$ or $20$ with probability $\frac{1}{4}$. The PGF for $\Uc$ is $\tilde u(z) = \frac{1}{4}\left(3z+z^{20}\right)$, so that $\expectation[\Uc]=5\frac{3}{4}$. We then compute $D=-\frac{231}{16}<0$, so there must be some interval starting at 0 for which $\expectation[\Tc]<\expectation[\Uc].$

\textbf{Example with $D>0$ (Figure \ref{fig:counter_Dpos})}\\
Now consider the preceding case with the probabilities reversed. The PGF for $\Uc$ is $\tilde u(z) = \frac{1}{4}\left(z+3z^{20}\right)$, and clearly $\expectation[\Uc]=15\frac{1}{4}$. We then compute $D=\frac{2\left(\frac{61}{4}\right)^2 - \frac{1140}{4}}{2}=\frac{1441}{16}>0$. Since $\expectation[\Tc](\rho)$ is not convex in $\rho$, despite $D>0$, we see in Figure \ref{fig:counter_Dpos} that there is a region of beneficial restart (albeit not starting at 0).\\

\begin{figure}[h]
\includegraphics[width=8.6cm]{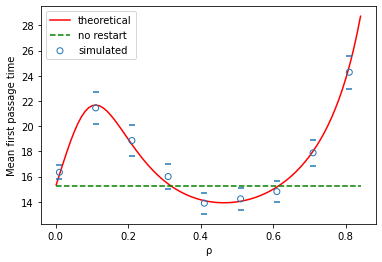}% Here is how to import EPS art
\caption{\label{fig:counter_Dpos} This plot shows $\expectation[\Tc](\rho)$ for selected values of $\rho$ between .01 and .841. Despite $D>0$, we can clearly see an interval where $\expectation[\Tc]<\expectation[\Uc]$. Simulated values were averaged over 2000 trials, and we include 99\% confidence intervals.}
\end{figure}

\subsection{Sharp restart and piecewise linear behavior}
We have no analogous trick for the sharp reset, but there is a special case of the underlying process worth mentioning. If we reconsider (\ref{eq:hitting_time-sharp}), we see that $N$ appears as limits of sums of $u(n)$ and multiplied by $\frac{1-U(N-1)}{U(N-1)}$, which is the expected number of restarts. 
\begin{equation*}
    \expectation[\Tc](N) = \frac{1 - U(N-1)}{U(N-1)}N + \frac{\sum_{n=0}^{N-1}nu(n)}{U(N-1)}
\end{equation*}
Importantly, if there are gaps in the support of $u(n)$, the cumulative mass function and the index-weighted sum are both constant across that gap. This implies that $\expectation[\Tc](N)$ is linear in $N$ across gaps in the support of the probability mass function, and it has a strictly positive slope, which decreases monotonically as $N\to\infty$. This may seem to be very specific, but it's not uncommon for gaps to exist in the support of $u(n)$. Even the simple example of the FP time to 0 a nearest-neighbor random walk on the integer lattice has this property. If the walk begins at 1, then $\Uc$ is almost surely odd. The implication above indicates that $\expectation[\Tc](N)$ would always be larger for $N=2k$ than for $N=2k+1$, $k\in\naturals.$

\section{The ``Cycle Trap"}\label{sec:cycle_trap}
Now we introduce an example that serves quite well to explore the concepts of the previous sections. The process we name the ``cycle trap" has a finite state space labeled by the integers from $-L$ to $M$, where $L,M\in\integers^+$. The process begins at the vertex labeled 0, and terminates at vertex $-L$, with movement between the vertices largely deterministic. At 0, the process can move to vertex -1 with probability $p$ or to 1 with probability $q \coloneqq 1-p$. If it moves to -1, then it continues moving to $-L$ one vertex at a time with probability 1 at each step. If it instead moves to 1, then it continues similarly to vertex $M$ before cycling back 0, as seen in Figure \ref{fig:cycle_trap_diagram}.

\begin{figure}[h]
\includegraphics[width=8.6cm]{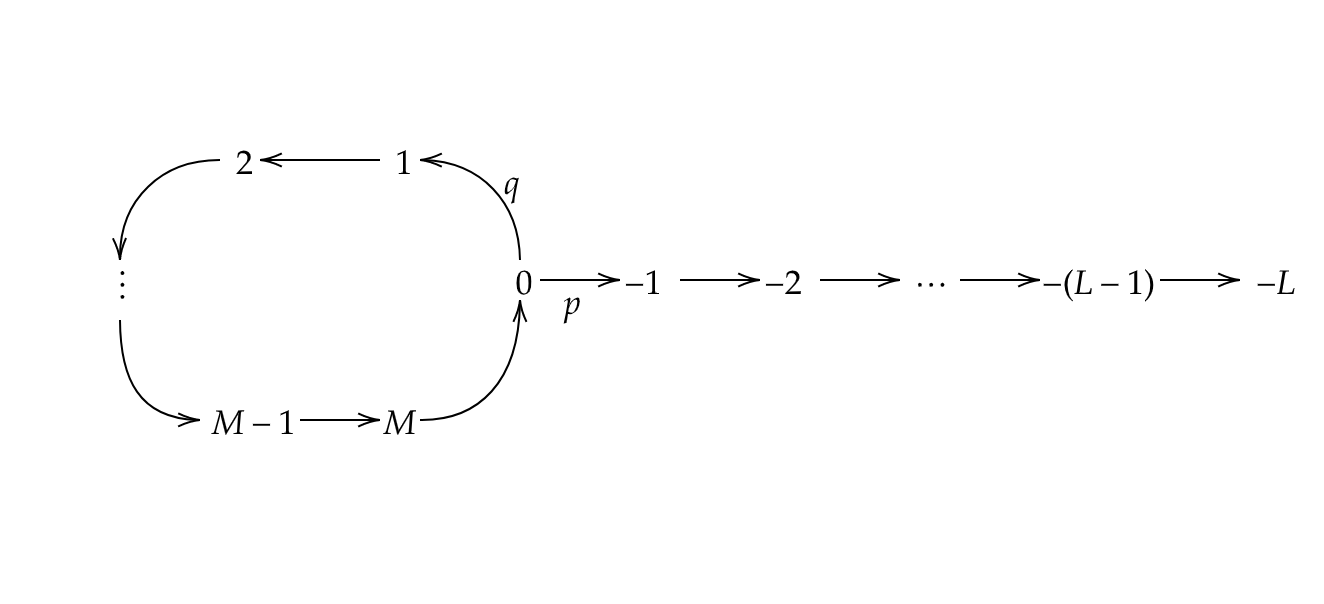}% Here is how to import EPS art
\caption{\label{fig:cycle_trap_diagram} The Cycle Trap}
\end{figure}

The above formulation admits a PGF for the first passage time to $-L$ of $\tilde u(z) = \frac{pz^L}{1-qz^{M+1}}$. Provided that $q\neq1$, evaluating $\tilde u$ and its derivative at $z=1$ yields a hitting probability of $\Ec_\Uc = 1$ and a mean first passage time of $\expectation[\Uc] = L + \frac{q}{p}(M+1)$. After deriving a few formulas, adjusting these parameters ($p$, $L$ and $M$) will allow us to explore some nuances of beneficial restart. Just as in Section \ref{sec:ET<EU}, we shall focus on the geometric and sharp distributions.

\subsubsection{Geometric Restart}
Returning to equation (\ref{eq:hitting_time-geom}), we can plug in $\tilde u(z) = \frac{pz^L}{1 - qz^{M+1}}$ to find 
\begin{equation*}
    \expectation[\Tc] = \frac{1 - p(1-\rho)^L - q(1-\rho)^{M+1}}{p\rho(1-\rho)^L},
\end{equation*}
with $\rho$ as our rate parameter for the restart. We first note that L'H\^{o}pital's Rule confirms $\lim_{\rho\to0}\expectation[\Tc]=L+\frac{q}{p}(M+1)=\expectation[\Uc]$. Since $u(0)=0$ and $\lim_{\rho\to1}\expectation[\Tc]=\infty$, we can use the derivative criterion to check for beneficial restart. We also show that $\expectation[\Tc]$ is convex in $\rho$ by rewriting
\small
\begin{align*}
    \expectation[\Tc] & = \frac{p(1-(1-\rho)^L) + q(1-(1-\rho)^{M+1})}{p \rho (1-\rho)^L} \\
    & = (1-\rho)^{-L} + (1-\rho)^{-L+1} + \ldots + (1-\rho)^{-1} \\
    & \quad + \frac{q}{p}\left((1-\rho)^{-L} + (1-\rho)^{-L+1} + \ldots + (1-\rho)^{M-L}\right).
\end{align*}
\normalsize
Here we see that $\expectation[\Tc]$ is a positive linear combination of integer powers of $(1-\rho)$ and is thus convex in $\rho$. This observation indicates that the derivative criterion is biconditional: the restart will have some beneficial interval of $\rho$ values if and only if $D<0$.

\begin{figure}[h]
\includegraphics[width=8.6cm]{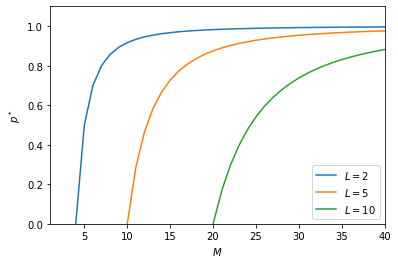}
\caption{\label{fig:cycletrap_p_bound}For selected values of $L$, the upper bounds for the bias as a function of $M$ are given.}
\end{figure}

We compute that the condition $D<0$ is equivalent to $(L+1)L+\frac{q}{p}(M+1)(2L-M)<0$, which is true exactly when $p<p^*\coloneqq\frac{(M+1)(M-2L)}{(M+1)(M-2L)+L(L+1)}$, as shown in Figure \ref{fig:cycletrap_p_bound}, where we recall that $p$ is the probability that the process moves to exit the trap. This expression is clearly equal to 0 when $M=2L$ and increases monotonically towards 1 as $M/L$ gets large. When $\frac{M}{L}>2$, this upper bound is strictly greater than 0, and there exists a range of values for $p$ such that there can be beneficial restart. Examples of both the $D<0$ and $D>0$ cases can be seen in Figure \ref{fig:cycletrap}.

\begin{figure}[h]
\includegraphics[width=8.6cm]{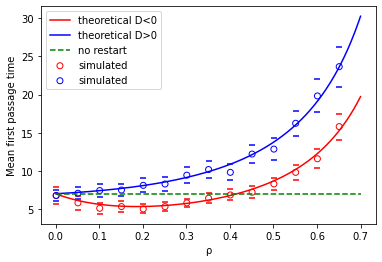}
\caption{\label{fig:cycletrap} Two examples of the cycletrap with geometric restart. The $D<0$ case has parameter values $(p,L,M)=(\frac{3}{4},2,14)$, and the $D>0$ case has parameter values selected to produce the same mean hitting time for the underlying process, with $(p,L,M) = (\frac{1}{2},2,4)$. Both blow up as $\rho\to1$. The simulated values were averaged over 500 trials, and we include 99\% confidence intervals.}
\end{figure}
Speaking broadly, this condition means that $M$ must be at least twice $L$ for beneficial restart to be possible, but as bias toward the exit increases (i.e. for larger values of $p$), greater ratios of $M$ to $L$ are required. On the other hand, for any positive value of $p$, the ``cost" of falling into the trap denoted by $\frac{M}{L}$ can be increased sufficiently to allow for beneficial restart. 

The sharp restart is less restrictive than the geometric, requiring only that $M>L$ with no dependence on $p$, as we see in the next section.

\subsubsection{Sharp Restart}
In order to calculate some of the terms in equation (\ref{eq:hitting_time-sharp}), we must first obtain the probability mass function by extracting the coefficients of $\tilde u(z)$. Then we can compute the cumulative mass function, $U(n)$, by summing these coefficients. Thankfully, in the case of the cycle trap, this is straightforward, as we can write $\tilde u(z) = pz^L\sum_{k=0}^\infty \left(qz^{M+1}\right)^k$, which gives $u(n)=pq^{\frac{n-L}{M+1}}$ for $n \equiv L \Mod{M+1}$ and 0 otherwise. Computing the partial and index-weighted partial sums results in 
\begin{equation*}
    \expectation[\Tc] = L + \frac{q^{K+1}N + \frac{q}{p}(M+1)\left(Kq^{K+1}-(K+1)q^K + 1\right)}{1-q^{K+1}},
\end{equation*}
where $K = \left\lfloor \frac{N-1-L}{M+1} \right\rfloor$. We immediately notice when looking at the plots of $\expectation[\Tc](N)$ in Figures \ref{fig:cyc_sharp_1} and \ref{fig:cyc_sharp_2} the piecewise linear behavior described in Section \ref{sec:ET<EU}, with $\expectation[\Tc]$ increasing linearly over gaps in the support of $u(n)$.

We also give a criterion that determines the relationship between $\expectation[\Tc]$ and $\expectation[\Uc]$:
\begin{itemize}
    \item When $L \ge M$, we have $\expectation[\Tc] \ge \expectation[\Uc]$ for all values of $N$. In particular, $L>M$ implies $\expectation[\Tc] > \expectation[\Uc]$.
    \item When $L < M$, beginning at $N=L+1$, every $M+1$ values of $N$ will have first $M-L$ values with $\expectation[\Tc] < \expectation[\Uc]$, then one value of $N$ with $\expectation[\Tc] = \expectation[\Uc]$ followed by $L$ values of $N$ with $\expectation[\Tc] > \expectation[\Uc]$. 
\end{itemize}
This can be verified directly by subtracting $\expectation[\Uc]$ from $\expectation[\Tc]$.
\scriptsize
\begin{align*}
    \expectation[\Tc] - \expectation[\Uc] & = L + \frac{q^{K+1}N + \frac{q}{p}(M+1)\left(Kq^{K+1}-(K+1)q^K + 1\right)}{1-q^{K+1}} \\
    & \quad - \left(L + \frac{p}{q}(M+1)\right) \\
    & = \frac{q^{K+1}}{1-q^{K+1}}\left(N-(M+1)(K+1)\right) \\
    & = \frac{q^{K+1}(M+1)}{1-q^{K+1}}\left(\frac{N-1-M}{M+1}-\left\lfloor{\frac{N-1-L}{M+1}}\right\rfloor\right)
\end{align*}
\normalsize
The last line establishes the previous dichotomy. When $L \ge M$, we have $\expectation[\Tc]\ge\expectation[\Uc]$ with equality only when $L=M$ and $N\equiv 0 \Mod{M+1}$. Beneficial restart occurs only when $L < M$, for values of $N$ that satisfying $a(M+1) + (L+1) \le N < (a+1)(M+1)$ for $a\in\naturals$. It's worth mentioning, further, that there are no cases where $\expectation[\Tc]\le\expectation[\Uc]$ for all values of $N$. It is always true the $\expectation[\Tc](N)>\expectation[\Uc]$ for $N \equiv L \Mod{M+1}$.\\
\begin{figure}[h]
\includegraphics[width=8.6cm]{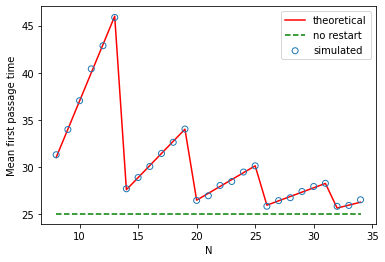}
\caption{\label{fig:cyc_sharp_1} The cycle trap with $(p,L,M) = (.25,7,5)$:  Since $L> M$, we observe the behavior in which the hitting time decreases towards $\expectation[\Uc]$, never going below. The simulated values were averaged over 50000 trials.}
\end{figure}\\
\textbf{Example with no beneficial restart (Figure \ref{fig:cyc_sharp_1})}\\
To demonstrate the first kind of behavior, we pick parameter values $(p,L,M)=(.25,7,5)$, so that $L\ge M$. Looking towards Figure \ref{fig:cyc_sharp_1}, we see a case where $\expectation[\Tc]>\expectation[\Uc]$ for all $N$.\\
\textbf{Example with beneficial restart (Figure \ref{fig:cyc_sharp_2})}\\
To demonstrate the second kind, we pick parameter values $(p,L,M)=(.25,5,10)$, so that $L<M$. The restart is clearly beneficial for certain values of $N$ and not for others.
\begin{figure}[h]
\includegraphics[width=8.6cm]{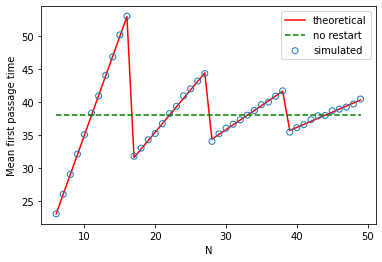}
\caption{\label{fig:cyc_sharp_2} The cycle trap with $(p,L,M) = (.25,5,10)$:  Since $L< M$, we observe the behavior in which the hitting time moves back and forth across $\expectation[\Uc]$, with the difference decaying to 0. The simulated values were averaged over 50000 trials.}
\end{figure}\\
An interesting observation one might make after seeing these figures is that $\expectation[\Tc]$ increases across gaps in the support of $u(n)$, as we demonstrated in Section \ref{sec:hitting_times}, but appears to decrease at each value of $N$ that increases $K$, which are precisely those values of $N$ such that $u(N)>0$. To see that this is true, we can pick, for some $a\in\naturals$,
\begin{align*}
    N_1 = L+a(M+1), & \quad K_1 = a-1 \\
    N_2 = L+1+a(M+1), & \quad K_2 = a.
\end{align*}
Notice that $N_1$ has been picked such that $u(N_1)>0$ and we may compute $\expectation[\Tc](N_1)-\expectation[\Tc](N_2)$. 

After some algebra, we see that this difference is $\frac{\left(1+q^{a+1}\right)q^aL}{\left(1-q^a\right)\left(1-q^{a+1}\right)}+\frac{q^{a+1}}{1-q^{a+1}}M$, which is positive, meaning that the graph of $\expectation[\Tc](N)$ will always demonstrate this saw-tooth behavior, decreasing after a value in the support and increasing otherwise. This is, however, not true for all underlying processes, as we will show in the next example.

\section{Biased random walk on $\naturals$}
We now turn our attention to a more interesting and classical example. One of the early major investigations into stochastic restart was \cite{evans2011diffusion}, in which Evans and Majumdar considered symmetric diffusion on the half-line with 0 as an absorbing boundary. This is a well-known problem in the study of first passage processes, and has an infinite mean hitting time. Evans and Majumdar show that introducing a restart mechanism can make this mean hitting time finite. More recently, Christophorov showed in \cite{christophorov2020peculiarities} that the discrete analogue of Evan's and Majumdar's model has some interesting differences in behavior. In this section, we hope to build on the preceding works to discuss the case of the discrete biased random walk on $\naturals$.
\subsection{The PGF for the underlying process}
Just as we outlined in Section \ref{sec:PGF_for_T}, we want to first obtain the generating function for the FP process, in this case, the biased random walk on $\naturals$ with 0 as an its terminal state. The derivation of this equation is too long to include here, though the result has been known at least since \cite{feller1957introduction}. For a starting point $m\in\integers^+$, with $p\in(0,1)$ the probability of moving towards 0 and $q\coloneqq1-p$ its complement, we have the PGF for the first passage time to 0 below.

\begin{equation*}
    \tilde u(z) = \left(\frac{1-\sqrt{1-4pqz^2}}{2qz}\right)^m
\end{equation*}

Using this expression, we can immediately check the hitting probability and mean hitting time of the FP process.

\begin{align*}
    \Ec_\Uc & = \min\left(1,\frac{p}{q}\right)^m \\
    \expectation[\Uc] & = 
    \begin{cases}
        \frac{m}{p-q} & p>q \\
        \infty & p \le q
    \end{cases}
\end{align*}

Clearly, the case with $p=q$ has a hitting probability of 1, meaning that it will almost surely reach 0 in finite time, but a mean hitting time of infinity, even starting a single vertex away. This is the case studied with restart by Evans and Majumdar (in the continuous time paradigm), and by Christophorov (in the discrete).

By Sections \ref{sec:hiting_probabilities} and \ref{sec:hitting_times}, we know that the $q > p$ case is also dramatically changed by adding a restart mechanism. Provided that the restart is non-preemptive and that $\Ec_\Rc = 1$, we know that the hitting probability of the resultant FPUR is 1, that is, $\Ec_\Tc = 1$. Furthermore, so long as $\expectation[\Rc]<\infty$ (such as with the geometric restart) the mean hitting time also becomes finite. Thus, even for the ``misbehaved" case where the underlying process is biased away, adding a suitable restart mechanism means that 0 becomes positive recurrent. It then remains, just as before, to determine the conditions under which restart may be beneficial.

\subsection{When is $\expectation[\Tc] < \expectation[\Uc]?$}
As discussed in Section \ref{sec:ET<EU}, we often frame our questions about FPUR around whether the restart can reduce the mean hitting time. For $q\ge p$, equivalently $p \le \frac{1}{2}$, we have established that $\expectation[\Tc] < \expectation[\Uc]$ for any non-preemptive restart by the simple fact that $\expectation[\Tc]$ is finite and $\expectation[\Uc]$ is not. When $p > q$, however, things get more interesting.
\subsubsection{Geometric Restart}
Recall from Section \ref{sec:hitting_times} that for a geometrically distributed restart time with parameter $\rho\in(0,1)$, we can express the mean hitting time by
$\expectation[\Tc] = \frac{1 - \tilde u(1-\rho)}{\rho \tilde u(1-\rho)}$. This formula, combined with our expression for $\tilde u(z)$ allows us to offer an explicit expression for the mean hitting time of the FPUR.

\begin{equation*}
    \expectation[\Tc] = \frac{\left( 2q(1-\rho)\right)^m - \left( 1 - \sqrt{1 - 4pq(1-\rho)^2} \right)^m }{\rho \left( 1 - \sqrt{1 - 4pq(1-\rho)^2} \right)^m}
\end{equation*}
Just as with the cycle trap, we can utilize the derivative criterion, but demonstrating convexity is not so simple. However, we can instead argue for a biconditional result another way. Defining $\xi = 2(1-\rho)$, we can write, for $p>q$,
\small
\begin{align*}
    F(\xi) & = \rho(\expectation[\Tc]-\expectation[\Uc]) = \left(\frac{q\xi}{1-\sqrt{1-pq\xi^2}}\right)^m - \frac{(2-\xi)m}{2(p-q)} - 1 \\
    & = \left(\frac{1+\sqrt{1-pq\xi^2}}{p\xi}\right)^m - \frac{(2-\xi)m}{2(p-q)} - 1.
\end{align*}
\normalsize
As $0 \le \xi \le 2$, we examine the boundary cases, finding $F(2) = \left(\frac{1-\sqrt{1-4pq}}{2p}\right) - 1 = 0$, and $\lim_{\xi\to0}F(\xi)=+\infty$. Computing the first and second derivative of $F$ with respect to $\xi$, we find $F'(2)=0$ and that $\sgn(F''(\xi))$ can be reduced to $\sgn(\psi(\xi))$, where $\psi(\xi)\coloneqq m\sqrt{1-pq\xi^2}+1-2pq\xi^2$. Since $\psi(\xi)$ is decreasing with $\psi(0) = m+1>0$, we have only two possibilites:
\begin{itemize}
    \item $\psi(2)\ge0$, in which case $F''(\xi)>0$ for all $\xi\in(0,2)$, implying $F'(\xi)<0$ and $F(\eta)>0$. In this case, $\expectation[\Tc]>\expectation[\Uc]$ for all $\rho$ and restart is never beneficial.
    \item $\psi(2)<0$, in which case there exists some inflection point $\xi_0\in(0,2)$ such that $F''(\xi)>0$ for all $\xi<\xi_0$ and $F''(\xi)<0$ for all $\xi>\xi_0$. Consequently, $F(\xi)$ attains a negative minimum at some $\xi_1<\xi_0$, and there exists a $\xi^*\in(0,\xi_1)$ such that $F(\xi)>0$ for $\xi\in(0,\xi^*)$ and $F(\xi)<0$ for $\xi\in(\xi^*,2)$. This is precisely the case where there exists some $\rho^*$ such that restart is beneficial for $\rho\in(0,\rho^*)$.
\end{itemize}
We can examine the inequality $\psi(2)<0$ to find that there is a region of beneficial restart when $m\sqrt{1-4p}+1-8pq<0$, which is equivalent to $m<m^*\coloneqq\frac{8p(1-p)-1}{2p-1}$. In particular, if $p\ge\frac{3}{4}$, then $m^*\le1$ and geometric restart is not beneficial for any value of $\rho$. Some algebra allows us to rewrite this criterion as $p<p^*\coloneqq\frac{4-m+\sqrt{m^2+8}}{8}$, which is equivalent to $D<0$ from the derivative test. Note that this expression is decreasing in $m$ and asymptotically approaches $\frac{1}{2}$ as we can see in Figure \ref{fig:brwn_p_bound}. In other words, if the process begins further from 0, the bias must be less for restart to be beneficial. Framed differently, for even a very small bias towards 0, starting sufficiently far means that restart cannot be beneficial.
\begin{figure}[h]
\includegraphics[width=8.6cm]{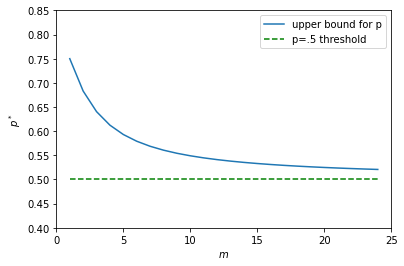}
\caption{\label{fig:brwn_p_bound} For a given starting point $m$, only values of $p$ below this bound, $p^*$ will admit a range of $\rho$ values for which restart is beneficial.}
\end{figure}\\

\subsubsection{Sharp Restart}
Unfortunately, the sharp restart case is not so easily tractable here as it was for the cycle trap, though we can make a couple simple observations. First, restart is preemptive for $N \le m$. Second, $u(n)$ will only be nonzero for values of $n$ with the same parity as $m$. Thus, $\expectation[\Tc]$ will be increasing at least every other value of $N$.

From equation (\ref{eq:hitting_time-sharp}), we need to compute $U(n) = \sum_{i=0}^nu(i)$ and $U_w(n) = \sum_{i=0}^niu(i)$, the forms of which can be seen below.
\begin{align*}
    U(n) & = mp^m\sum_{k=0}^{\lfloor\frac{n-m}{2}\rfloor} \frac{(pq)^k}{m+2k}{m+2k \choose k} \\
    U_w(n) & = mp^m\sum_{k=0}^{\lfloor\frac{n-m}{2}\rfloor} (pq)^k{m+2k \choose k}
\end{align*}
We have not found a way to write these sums explicitly, but we can compute them numerically to plot the mean FP time of the FPUR as a function of $N$. In so doing, we observe some apparently distinct behaviors as shown in Figures \ref{fig:brwn_sharp_1}, \ref{fig:brwn_sharp_2} and \ref{fig:brwn_sharp_3}.

\begin{figure}[h]
\includegraphics[width=8.6cm]{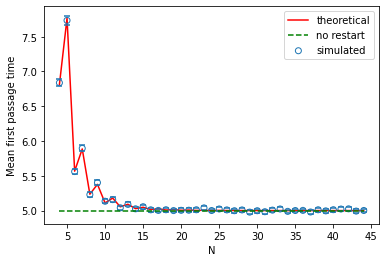}
\caption{\label{fig:brwn_sharp_1}For parameter values $(p,m)=(.8,3)$, we see that $\expectation[\Tc]$ starts high and then decreases towards $\expectation[\Uc]$. In this case, numerical solutions suggest that $\expectation[\Tc]>\expectation[\Uc]$ for all $N$, but we weren't able to verify that. This is clearly reminiscent of the cycle trap with $L>M$.}
\end{figure}
\begin{figure}[h]
\includegraphics[width=8.6cm]{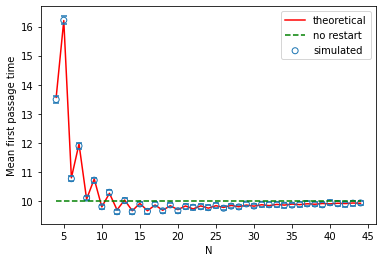}
\caption{\label{fig:brwn_sharp_2}By decreasing $p$ to $.65$, we see that $\expectation[\Tc]$ still begins above $\expectation[\Uc]$, but passes below before increasing towards the limiting value. Changing the value of $m$ seems only to change the steepness of this initial drop. The sawtooth behavior appears to continue indefinitely, but this has not been confirmed.}
\end{figure}
\begin{figure}[h]
\includegraphics[width=8.6cm]{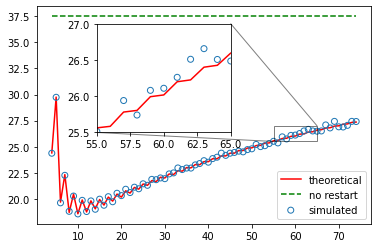}
\caption{\label{fig:brwn_sharp_3}As $p$ gets closer to .5, in this case $(p,m)=(.54,3)$, we can observe an interesting deviation from the behavior of the cycle trap: as $N$ increases, the sawtooth behavior eventually disappears, and $\expectation[\Tc]$ increases monotonically. Whether this behavior also exists in the previous example for some larger value of $N$ is unknown, but it is a distinct departure from what we saw with the cycle trap.}
\end{figure}

\section{Conclusion}
In this paper, we have presented some analysis of stochastic restart on FP processes in the case of discrete time. In particular, we have examined two restart distributions that permit some explicit formulas:  the sharp and the geometric. We note that whenever this restart is non-preemptive and occurs almost surely in finite time, then the mean FP time of the process with restart will also be finite, even if that isn't true for the underlying process. If we further assume that the restart distribution has finite mean, then the mean FP of the process with restart does as well. This provides us with a method of forcing any process to be positive recurrent (with the possible expense of extending its mean hitting time):  just add a non-preemptive restart mechanism with finite mean.

We then explored some of these mechanisms in the context of two examples, introducing the cycle trap stochastic process and examining the well-studied process of the biased random walk on the half-line. In particular, we showed that the addition of a restart mechanism can have a profound and surprising effect on the mean hitting time of a stochastic process.

% The \nocite command causes all entries in a bibliography to be printed out
% whether or not they are actually referenced in the text. This is appropriate
% for the sample file to show the different styles of references, but authors
% most likely will not want to use it.
\nocite{*}

%\bibliography{apssamp}% Produces the bibliography via BibTeX.
\bibliography{references}

%apsrev4-2.bst 2019-01-14 (MD) hand-edited version of apsrev4-1.bst
%Control: key (0)
%Control: author (8) initials jnrlst
%Control: editor formatted (1) identically to author
%Control: production of article title (0) allowed
%Control: page (0) single
%Control: year (1) truncated
%Control: production of eprint (0) enabled
\begin{thebibliography}{12}%
\makeatletter
\providecommand \@ifxundefined [1]{%
 \@ifx{#1\undefined}
}%
\providecommand \@ifnum [1]{%
 \ifnum #1\expandafter \@firstoftwo
 \else \expandafter \@secondoftwo
 \fi
}%
\providecommand \@ifx [1]{%
 \ifx #1\expandafter \@firstoftwo
 \else \expandafter \@secondoftwo
 \fi
}%
\providecommand \natexlab [1]{#1}%
\providecommand \enquote  [1]{``#1''}%
\providecommand \bibnamefont  [1]{#1}%
\providecommand \bibfnamefont [1]{#1}%
\providecommand \citenamefont [1]{#1}%
\providecommand \href@noop [0]{\@secondoftwo}%
\providecommand \href [0]{\begingroup \@sanitize@url \@href}%
\providecommand \@href[1]{\@@startlink{#1}\@@href}%
\providecommand \@@href[1]{\endgroup#1\@@endlink}%
\providecommand \@sanitize@url [0]{\catcode `\\12\catcode `\$12\catcode
  `\&12\catcode `\#12\catcode `\^12\catcode `\_12\catcode `\%12\relax}%
\providecommand \@@startlink[1]{}%
\providecommand \@@endlink[0]{}%
\providecommand \url  [0]{\begingroup\@sanitize@url \@url }%
\providecommand \@url [1]{\endgroup\@href {#1}{\urlprefix }}%
\providecommand \urlprefix  [0]{URL }%
\providecommand \Eprint [0]{\href }%
\providecommand \doibase [0]{https://doi.org/}%
\providecommand \selectlanguage [0]{\@gobble}%
\providecommand \bibinfo  [0]{\@secondoftwo}%
\providecommand \bibfield  [0]{\@secondoftwo}%
\providecommand \translation [1]{[#1]}%
\providecommand \BibitemOpen [0]{}%
\providecommand \bibitemStop [0]{}%
\providecommand \bibitemNoStop [0]{.\EOS\space}%
\providecommand \EOS [0]{\spacefactor3000\relax}%
\providecommand \BibitemShut  [1]{\csname bibitem#1\endcsname}%
\let\auto@bib@innerbib\@empty
%</preamble>
\bibitem [{\citenamefont {Gupta}\ \emph {et~al.}(2014)\citenamefont {Gupta},
  \citenamefont {Majumdar},\ and\ \citenamefont
  {Schehr}}]{gupta2014fluctuating}%
  \BibitemOpen
  \bibfield  {author} {\bibinfo {author} {\bibfnamefont {S.}~\bibnamefont
  {Gupta}}, \bibinfo {author} {\bibfnamefont {S.~N.}\ \bibnamefont
  {Majumdar}},\ and\ \bibinfo {author} {\bibfnamefont {G.}~\bibnamefont
  {Schehr}},\ }\bibfield  {title} {\bibinfo {title} {Fluctuating interfaces
  subject to stochastic resetting},\ }\href@noop {} {\bibfield  {journal}
  {\bibinfo  {journal} {Physical review letters}\ }\textbf {\bibinfo {volume}
  {112}},\ \bibinfo {pages} {220601} (\bibinfo {year} {2014})}\BibitemShut
  {NoStop}%
\bibitem [{\citenamefont {Reuveni}\ \emph {et~al.}(2014)\citenamefont
  {Reuveni}, \citenamefont {Urbakh},\ and\ \citenamefont
  {Klafter}}]{reuveni2014role}%
  \BibitemOpen
  \bibfield  {author} {\bibinfo {author} {\bibfnamefont {S.}~\bibnamefont
  {Reuveni}}, \bibinfo {author} {\bibfnamefont {M.}~\bibnamefont {Urbakh}},\
  and\ \bibinfo {author} {\bibfnamefont {J.}~\bibnamefont {Klafter}},\
  }\bibfield  {title} {\bibinfo {title} {Role of substrate unbinding in
  michaelis--menten enzymatic reactions},\ }\href@noop {} {\bibfield  {journal}
  {\bibinfo  {journal} {Proceedings of the National Academy of Sciences}\
  }\textbf {\bibinfo {volume} {111}},\ \bibinfo {pages} {4391} (\bibinfo {year}
  {2014})}\BibitemShut {NoStop}%
\bibitem [{\citenamefont {Rold{\'a}n}\ \emph {et~al.}(2016)\citenamefont
  {Rold{\'a}n}, \citenamefont {Lisica}, \citenamefont {S{\'a}nchez-Taltavull},\
  and\ \citenamefont {Grill}}]{roldan2016stochastic}%
  \BibitemOpen
  \bibfield  {author} {\bibinfo {author} {\bibfnamefont {{\'E}.}~\bibnamefont
  {Rold{\'a}n}}, \bibinfo {author} {\bibfnamefont {A.}~\bibnamefont {Lisica}},
  \bibinfo {author} {\bibfnamefont {D.}~\bibnamefont {S{\'a}nchez-Taltavull}},\
  and\ \bibinfo {author} {\bibfnamefont {S.~W.}\ \bibnamefont {Grill}},\
  }\bibfield  {title} {\bibinfo {title} {Stochastic resetting in backtrack
  recovery by rna polymerases},\ }\href@noop {} {\bibfield  {journal} {\bibinfo
   {journal} {Physical Review E}\ }\textbf {\bibinfo {volume} {93}},\ \bibinfo
  {pages} {062411} (\bibinfo {year} {2016})}\BibitemShut {NoStop}%
\bibitem [{\citenamefont {Huang}(2007)}]{huang2007effect}%
  \BibitemOpen
  \bibfield  {author} {\bibinfo {author} {\bibfnamefont {J.}~\bibnamefont
  {Huang}},\ }\bibfield  {title} {\bibinfo {title} {The effect of restarts on
  the efficiency of clause learning.},\ }in\ \href@noop {} {\emph {\bibinfo
  {booktitle} {IJCAI}}},\ Vol.~\bibinfo {volume} {7}\ (\bibinfo {year} {2007})\
  pp.\ \bibinfo {pages} {2318--2323}\BibitemShut {NoStop}%
\bibitem [{\citenamefont {Evans}\ \emph {et~al.}(2020)\citenamefont {Evans},
  \citenamefont {Majumdar},\ and\ \citenamefont
  {Schehr}}]{evans2020stochastic}%
  \BibitemOpen
  \bibfield  {author} {\bibinfo {author} {\bibfnamefont {M.~R.}\ \bibnamefont
  {Evans}}, \bibinfo {author} {\bibfnamefont {S.~N.}\ \bibnamefont
  {Majumdar}},\ and\ \bibinfo {author} {\bibfnamefont {G.}~\bibnamefont
  {Schehr}},\ }\bibfield  {title} {\bibinfo {title} {Stochastic resetting and
  applications},\ }\href@noop {} {\bibfield  {journal} {\bibinfo  {journal}
  {Journal of Physics A: Mathematical and Theoretical}\ }\textbf {\bibinfo
  {volume} {53}},\ \bibinfo {pages} {193001} (\bibinfo {year}
  {2020})}\BibitemShut {NoStop}%
\bibitem [{\citenamefont {Pal}\ and\ \citenamefont
  {Reuveni}(2017)}]{pal2017first}%
  \BibitemOpen
  \bibfield  {author} {\bibinfo {author} {\bibfnamefont {A.}~\bibnamefont
  {Pal}}\ and\ \bibinfo {author} {\bibfnamefont {S.}~\bibnamefont {Reuveni}},\
  }\bibfield  {title} {\bibinfo {title} {First passage under restart},\
  }\href@noop {} {\bibfield  {journal} {\bibinfo  {journal} {Physical review
  letters}\ }\textbf {\bibinfo {volume} {118}},\ \bibinfo {pages} {030603}
  (\bibinfo {year} {2017})}\BibitemShut {NoStop}%
\bibitem [{\citenamefont {Bonomo}\ and\ \citenamefont
  {Pal}(2021{\natexlab{a}})}]{bonomo2021first}%
  \BibitemOpen
  \bibfield  {author} {\bibinfo {author} {\bibfnamefont {O.~L.}\ \bibnamefont
  {Bonomo}}\ and\ \bibinfo {author} {\bibfnamefont {A.}~\bibnamefont {Pal}},\
  }\bibfield  {title} {\bibinfo {title} {First passage under restart for
  discrete space and time: Application to one-dimensional confined lattice
  random walks},\ }\href@noop {} {\bibfield  {journal} {\bibinfo  {journal}
  {Physical Review E}\ }\textbf {\bibinfo {volume} {103}},\ \bibinfo {pages}
  {052129} (\bibinfo {year} {2021}{\natexlab{a}})}\BibitemShut {NoStop}%
\bibitem [{\citenamefont {Evans}\ and\ \citenamefont
  {Majumdar}(2011)}]{evans2011diffusion}%
  \BibitemOpen
  \bibfield  {author} {\bibinfo {author} {\bibfnamefont {M.~R.}\ \bibnamefont
  {Evans}}\ and\ \bibinfo {author} {\bibfnamefont {S.~N.}\ \bibnamefont
  {Majumdar}},\ }\bibfield  {title} {\bibinfo {title} {Diffusion with
  stochastic resetting},\ }\href@noop {} {\bibfield  {journal} {\bibinfo
  {journal} {Physical review letters}\ }\textbf {\bibinfo {volume} {106}},\
  \bibinfo {pages} {160601} (\bibinfo {year} {2011})}\BibitemShut {NoStop}%
\bibitem [{\citenamefont
  {Christophorov}(2020)}]{christophorov2020peculiarities}%
  \BibitemOpen
  \bibfield  {author} {\bibinfo {author} {\bibfnamefont {L.}~\bibnamefont
  {Christophorov}},\ }\bibfield  {title} {\bibinfo {title} {Peculiarities of
  random walks with resetting in a one-dimensional chain},\ }\href@noop {}
  {\bibfield  {journal} {\bibinfo  {journal} {Journal of Physics A:
  Mathematical and Theoretical}\ }\textbf {\bibinfo {volume} {54}},\ \bibinfo
  {pages} {015001} (\bibinfo {year} {2020})}\BibitemShut {NoStop}%
\bibitem [{\citenamefont {Feller}(1957)}]{feller1957introduction}%
  \BibitemOpen
  \bibfield  {author} {\bibinfo {author} {\bibfnamefont {W.}~\bibnamefont
  {Feller}},\ }\href@noop {} {\emph {\bibinfo {title} {An introduction to
  probability theory and its applications}}}\ (\bibinfo  {publisher} {Wiley},\
  \bibinfo {year} {1957})\BibitemShut {NoStop}%
\bibitem [{\citenamefont {Redner}(2001)}]{redner2001guide}%
  \BibitemOpen
  \bibfield  {author} {\bibinfo {author} {\bibfnamefont {S.}~\bibnamefont
  {Redner}},\ }\href@noop {} {\emph {\bibinfo {title} {A guide to first-passage
  processes}}}\ (\bibinfo  {publisher} {Cambridge university press},\ \bibinfo
  {year} {2001})\BibitemShut {NoStop}%
\bibitem [{\citenamefont {Bonomo}\ and\ \citenamefont
  {Pal}(2021{\natexlab{b}})}]{bonomo2021p}%
  \BibitemOpen
  \bibfield  {author} {\bibinfo {author} {\bibfnamefont {O.~L.}\ \bibnamefont
  {Bonomo}}\ and\ \bibinfo {author} {\bibfnamefont {A.}~\bibnamefont {Pal}},\
  }\bibfield  {title} {\bibinfo {title} {The p\'olya and sisyphus lattice
  random walks with resetting--a first passage under restart approach},\
  }\href@noop {} {\bibfield  {journal} {\bibinfo  {journal} {arXiv preprint
  arXiv:2106.14036}\ } (\bibinfo {year} {2021}{\natexlab{b}})}\BibitemShut
  {NoStop}%
\end{thebibliography}%

\end{document}